\newtheorem{theorem}{Theorem}[section]
\newtheorem{conjecture}[theorem]{Conjecture}
\newtheorem{claim}{}[theorem]
\newtheorem{lemma}[theorem]{Lemma}
\newtheorem{problem}[theorem]{Problem}
\theoremstyle{definition}
\DeclareMathOperator{\ex}{ex}
\tikzset{every node/.style={inner sep=1.5pt,circle,draw,fill}}
\tikzstyle{every path}=[thick]
\newdimen\R
\newcommand{\cP}{\mathcal P}
\title{Dense circuit graphs and the planar Tur\'an number of a cycle}
\author[1]{Ruilin Shi}
\author[1]{Zach Walsh}
\author[1]{Xingxing Yu}
\affil[1]{School of Mathematics, Georgia Institute of Technology, USA}
\date{\today}
\begin{document}

\maketitle

\begin{abstract}
The \emph{planar Tur\'an number} $\textrm{ex}_{\mathcal P}(n,H)$ of a graph $H$ is the maximum number of edges in an $n$-vertex planar graph without $H$ as a subgraph. Let $C_k$ denote the cycle of length $k$. The planar Tur\'an number $\textrm{ex}_{\mathcal P}(n,C_k)$ is known for $k\le 7$. 
We show that dense planar graphs with a certain connectivity property (known as circuit graphs) contain large near triangulations,  and we use this result to obtain consequences for planar Tur\'an numbers.
In particular, we prove that there is a constant $D$ so that $\textrm{ex}_{\mathcal P}(n,C_k) \le 3n - 6 - Dn/k^{\log_2^3}$ for all $k, n\ge 4$.
When $k \ge 11$ this bound is tight up to the constant $D$ and proves a conjecture of Cranston, Lidick\'y, Liu, and Shantanam.
\end{abstract}

\section{Introduction}
The \emph{Tur\'an number} $\ex(n,H)$ of a graph $H$ is the maximum number of edges in an $n$-vertex graph without $H$ as a subgraph.
Tur\'an's theorem \cite{Turan}, a cornerstone of extremal graph theory, determines $\ex(n, K_t)$. 
This has led to a huge amount of related work for graphs and hypergraphs; see  \cite{Erdos-Stone}, \cite{ALON2016146}, and \cite{Keevash}.

A well-studied variant of Tur\'an's theorem involves restrictions of the host graph to an interesting class of graphs, such as hypercubes (see \cite{hypercube}) or  Erd\H os-R\'enyi random graphs (see \cite{random}).  
In 2015, Dowden \cite{Dowden2015ExtremalCP} considered planar graphs, and defined the \emph{planar Tur\'an number} $\ex_{\cP}(n,H)$ of a graph $H$ to be the maximum number of edges in an $n$-vertex planar graph without $H$ as a subgraph.  This initiated a flurry of research on planar Tur\'an numbers; see the survey paper of Lan, Shi, and Song \cite{Survey} for details regarding recent related work. 

In this paper, we focus on the planar Tur\'an number of $C_{k}$, the cycle of length $k$. Euler's formula easily gives $\ex_{\cP}(n, C_3) = 2n - 4$ for all $n \ge 3$. Dowden proved that $\ex_{\cP}(n, C_4) \le \frac{15(n-2)}{7}$ for all $n \ge 4$ and $\ex_{\cP}(n, C_5) \le \frac{12n - 33}{5}$ for all $n \ge 11$, and showed that in both cases equality holds infinitely often \cite{Dowden2015ExtremalCP}.
In \cite{6-cycle}, Ghosh, Gy\H ori, Martin, Paulos, and Xiao took the next step and proved that $\ex_{\cP}(n, C_6) \le \frac{5n}{2} - 7$ for all $n \ge 18$ with equality holding infinitely often, improving upon a result of Yan, Shi, and Song \cite{Theta-free}.
They also made the following conjecture.

\begin{conjecture}[\cite{6-cycle}] \label{conj: false conjecture}
    $\ex_{\cP}(n, C_{k}) \le \frac{3(k - 1)}{k}n - \frac{6(k + 1)}{k}$ for all $k \ge 7$ and all sufficiently large $n$.
\end{conjecture}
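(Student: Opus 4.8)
The plan is to attack the stated upper bound through Euler's formula together with a structural analysis of the face lengths. For a $2$-connected plane embedding of a $C_k$-free planar graph $G$ with $n$ vertices, $m$ edges, and $f_i$ faces of length $i$, Euler's formula rearranges into the identity $3n - 6 - m = \sum_{i \ge 4}(i-3) f_i$. Hence the bound in Conjecture~\ref{conj: false conjecture} is equivalent to the claim that the total ``face deficiency'' $\sum_{i\ge 4}(i-3)f_i$ is at least $(3n+6)/k$. The task thus reduces to showing that a $C_k$-free planar graph cannot be too close to a triangulation: it must carry enough long faces that their total excess length is $\Omega(n/k)$.

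I would first reduce to well-connected pieces. Using small separators one decomposes $G$ along cut vertices and $2$-cuts into pieces that are essentially $3$-connected, or more precisely into circuit graphs, since cycles interact with such small cuts in a controlled way and each piece inherits $C_k$-freeness (this reduction also justifies the $2$-connected assumption used for the Euler identity). I would then invoke the principle advertised in the abstract — that a dense circuit graph contains a large near-triangulation — to locate, inside any densely triangulated region of $G$, a near-triangulation on many vertices. The final ingredient would be a pancyclicity-type lemma: a near-triangulation on sufficiently many vertices contains a cycle of length exactly $k$. A fan on $t+1$ vertices already realizes every cycle length from $3$ to $t+1$, which makes such a statement plausible; if it held with threshold $t = \Theta(k)$, then every near-triangular piece avoiding $C_k$ would have $O(k)$ vertices, and partitioning $G$ into such pieces would force $\Omega(n/k)$ units of face deficiency, matching the conjecture.

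The hard part — and where I expect the plan to break down — is precisely the threshold in the pancyclicity step. To reach the constant $3/k$ demanded by the conjecture one needs every dense $C_k$-free near-triangulation to have only $O(k)$ vertices, and here I would become suspicious: although fans and other ``thin'' near-triangulations are pancyclic, one can build genuinely two-dimensional dense planar graphs that avoid a cycle of one prescribed length while remaining far larger than $k$. In particular, a recursive construction gluing three scaled copies of a $C_{k/2}$-free gadget into a $C_k$-free gadget obeys a size recursion of the shape $N(k) \approx 3\,N(k/2)$, producing $C_k$-free near-triangular pieces on $\Theta(k^{\log_2 3})$ vertices rather than $O(k)$. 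Feeding this correct threshold into the partitioning argument yields a face deficiency of only $\Omega(n/k^{\log_2 3})$, i.e.\ the bound $\ex_{\cP}(n,C_k)\le 3n-6-D n/k^{\log_2 3}$ announced in the abstract. Since $k^{\log_2 3}$ grows faster than $k$, this estimate is strictly weaker than the conjectured one, and the matching construction has more edges than the conjecture permits for large $k$. So rather than prove Conjecture~\ref{conj: false conjecture}, I would expect to \emph{disprove} it and settle instead for the weaker — but tight — bound.
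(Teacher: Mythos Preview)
Your analysis is correct and matches the paper: the conjecture is indeed false for $k \ge 11$, disproved via Moon--Moser-type $C_k$-free planar triangulations on $\Theta(k^{\log_2 3})$ vertices, and the paper instead proves the weaker tight bound $\ex_{\cP}(n,C_k) \le 3n - 6 - n/(4k^{\log_2 3})$. Your outlined route to that bound --- reduce to circuit graphs, locate a large near-triangulation in any dense piece, then extract a cycle of length exactly $k$ via the Chen--Yu long-cycle theorem plus a pancyclicity lemma --- is precisely the paper's approach.
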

It was recently shown independently by Gy\H ori, Li, and Zhou \cite{7-cycle} and the present authors \cite{Shi-Walsh-Yu}  that Conjecture \ref{conj: false conjecture} holds for $k = 7$.
However, the conjecture was disproved for all $k \ge 11$ by Cranston, Lidick\'y, Liu, and Shantanam \cite{CLLScounterexample}, and later also by Lan and Song \cite{ImprovedLowerBound} and Gy{\H o}ri, Varga, and Zhu \cite{2-sum-counterexample}, using the fact that planar triangulations with at least $11$ vertices need not be Hamiltonian. Specifically, Moon and Moser \cite{Moon-Moser} showed that for each $k \ge 11$ there exists a $C_{k}$-free planar triangulation with $(2k/7)^{\log_2 3}$ vertices; this is tight up to a constant factor by a result of Chen and Yu \cite{ChenYu2002}.

We next describe the construction that disproves Conjecture \ref{conj: false conjecture}, following Cranston et al. \cite{CLLScounterexample}. For a graph $G$, we use $V(G)$ and $E(G)$ to denote its vertex set and edge set, respectively, and we write $v(G)$ for $|V(G)|$ and $e(G)$ for $|E(G)|$. 
Let $k$ and $n$ be integers with $k \ge 5$ and $n \ge k+1$.
Let  $G$ be an $n$-vertex planar graph with girth $k + 1$,  each vertex having degree $2$ or $3$, and $\frac{k+1}{k - 1}(n - 2)$ edges; such a graph exists for infinitely many integers $n$ \cite[Lemma 2]{CLLScounterexample}.
Let $G'$ be obtained from $G$ by sudividing each edge and then \emph{substituting} for each vertex of $G$ a $C_k$-free planar triangulation $B$ on as many vertices as possible, which means that each vertex $v$ of $G$ is replaced by a copy of $B$ and that $\deg_G(v)$ vertices of $B$ on a facial triangle are identified with the neighbors of $v$ in $G$.
Then $G'$ is a $C_{k}$-free graph.
Cranston et al. conjecture that $G'$ has $\ex_{\cP}(v(G'), C_k)$ edges \cite{CLLScounterexample}, but acknowledge that this will be difficult to prove, in part because the number of vertices of $B$ is only known up to a constant factor.
Because of this, they also made the following weaker conjecture.

\begin{conjecture}[\cite{CLLScounterexample}]
    There is a constant $D$ so that for all $k$ and sufficiently large $n$ we have $\ex_{\cP}(n, C_k) \le 3n - \frac{Dn}{k^{\log_2 3}}$.
\end{conjecture}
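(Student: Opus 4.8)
The plan is to bound the \emph{deficiency} $3n-e(G)$ of a $C_k$-free planar graph $G$ from below by $Dn/k^{\log_2 3}$. Fix a plane embedding; by Euler's formula $3n-6-e(G)=\sum_{F}(\ell(F)-3)$, where the sum runs over the faces $F$ and $\ell(F)$ is the length of $F$, so it suffices to exhibit a large amount of non-triangular face-length. The engine is the paper's structural theorem: a circuit graph that is too dense must contain a \emph{large} near-triangulation, where ``large'' degrades as the deficiency grows. Since $C_k$-freeness will force every near-triangulation inside $G$ to have at most $O(k^{\log_2 3})$ vertices, a dense piece would contain a forbidden near-triangulation; inverting the size-versus-deficiency trade-off then yields a deficiency of order $v/k^{\log_2 3}$ on each piece, and resumming gives the theorem. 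Adding edges only increases $e(G)$, so I may assume $G$ is edge-maximal.

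First I reduce to circuit graphs. I decompose $G$ along its cut vertices and $2$-cuts via the block tree together with the $3$-block decomposition, inserting a virtual edge at each separating pair so that every indecomposable piece is a circuit graph $(H,C)$. Because $\sum_F(\ell(F)-3)$ is essentially additive across this decomposition (up to a bounded correction per virtual edge), it suffices to produce a large near-triangulation in each sufficiently dense piece and then transfer the resulting $C_k$ back to $G$. This transfer is a genuinely delicate point: a $k$-cycle that uses a virtual edge need not have length $k$ in $G$. I would arrange the extraction to take place in the real, triangulated interior of a piece, with virtual edges confined to the outer cycle $C$, so that the forbidden cycle is assembled entirely from edges of $G$ and honestly has length $k$.

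The core step, in a single circuit graph $(H,C)$ on $v$ vertices with deficiency $\delta=3v-6-e(H)$, is to show $\delta\ge D'v/k^{\log_2 3}$. Here I invoke the structural theorem: if $\delta$ were smaller, $H$ would contain a near-triangulation $N$ on more than $c\,k^{\log_2 3}$ vertices, which cannot exist in a $C_k$-free graph. Indeed a near-triangulation is a circuit graph, so by the circumference theorem of Chen and Yu \cite{ChenYu2002} it has a cycle of length at least $c'\,v(N)^{\log_3 2}$; as $\log_3 2=1/\log_2 3$, the bound $v(N)>c\,k^{\log_2 3}$ forces this length to be at least $k$, the threshold matching the Moon--Moser construction \cite{Moon-Moser} up to a constant. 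Any cycle of a near-triangulation bounds an internally triangulated disk, and a triangulated disk with boundary length $\ell\ge k$ contains a cycle of every length from $3$ to $\ell$; in particular it contains $C_k$, contradicting $C_k$-freeness. Hence $\delta\ge D'v/k^{\log_2 3}$, with the few small pieces handled directly.

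The main obstacle is the structural theorem itself, and specifically getting its \emph{quantitative} form right: I need the guaranteed near-triangulation to have on the order of $v/(\delta+1)$ vertices, since only such an inverse-linear trade-off inverts to a deficiency that is linear in $v/k^{\log_2 3}$; a weaker trade-off would lose the sharp exponent. The remaining difficulties, which I expect to be manageable, are making the exact-length extraction ($C_\ell\Rightarrow C_k$ inside a triangulated disk) rigorous by an induction peeling off boundary triangles, and carrying the circuit-graph reduction uniformly so that the bound holds for all $n\ge 4$ rather than only for large $n$ --- which is what upgrades the conjecture to the stronger $3n-6-Dn/k^{\log_2 3}$ stated in the abstract.
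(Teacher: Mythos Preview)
Your outline is correct and tracks the paper's proof closely: the structural theorem (Theorem~\ref{find a triangulation}) gives a large near-triangulation in any dense circuit graph, Chen--Yu then yields a cycle of length at least $k$, and the exact-length extraction (your ``peeling off boundary triangles'' is the paper's Lemma~\ref{lem: exact cycle}) produces a $C_k$. The quantitative threshold you identify --- a near-triangulation of order $v/(\delta+1)$ --- is exactly what Theorem~\ref{find a triangulation} provides, with $t=\lceil k^{\log_2 3}\rceil$.

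The one place you diverge from the paper, and where you make your own life harder, is the reduction to circuit graphs. You propose a Tutte/SPQR-style decomposition with virtual edges and then worry (rightly) that a $C_k$ found in a piece might use a virtual edge and not survive in $G$. The paper sidesteps this entirely: take a minimal counterexample $G$ and show by a direct edge-count across a putative $(\le 2)$-separation that $G$ must already be $3$-connected. Then $(G,C)$ with $C$ the outer cycle is itself a circuit graph, the near-triangulation $T$ found by Theorem~\ref{find a triangulation} is an honest subgraph of $G$, and no virtual edges ever appear. This is both shorter and cleaner than your decomposition, and removes the ``genuinely delicate point'' you flagged; you should replace your reduction with this one.
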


While this might not provide an exact upper bound, it is tight up to the constant $D$ when $k \ge 11$.
This follows from Cranston et al.'s construction, and also a more recent simpler construction of Gy{\H o}ri, Varga, and Zhu \cite{2-sum-counterexample} which shows that $D$ can be at most $12$.
We prove this conjecture with $D = 1/4$.

\begin{theorem} \label{main}
    $\ex_{\cP}(n, C_k) \le 3n - 6 - \frac{n}{4k^{\log_2 3}}$ for all $k,n \ge 4$.
\end{theorem}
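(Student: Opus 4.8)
The plan is to prove the contrapositive density statement: being $C_k$-free should force an $n$-vertex planar graph to fall short of the triangulation bound $3n-6$ by at least $n/(4k^{\log_2 3})$ edges. The natural measure of this shortfall is the \emph{face excess}. Fixing a plane embedding of a connected $C_k$-free planar graph $G$ and writing $\ell(F)$ for the length of a face $F$, Euler's formula gives $\sum_F(\ell(F)-3) = 2e - 3f = t$, where $e(G)=3n-6-t$ and $f$ is the number of faces; so it suffices to prove $t \ge n/(4k^{\log_2 3})$. First I would dispose of the low-connectivity cases: if $G$ is disconnected or has a cut vertex, I decompose it into blocks and check that the target inequality is subadditive over the block tree (the $-6$ term and the linear deficiency term combine correctly because cut vertices are overcounted), reducing everything to the $2$-connected case, and further to circuit graphs, which are the objects with the clean recursive structure needed below.

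The core is a structural dichotomy for dense circuit graphs, to be proved by induction on $v(G)$: every circuit graph either contains a \emph{near-triangulation} (a $2$-connected plane subgraph all of whose internal faces are triangles) on more than $M$ vertices, or satisfies $e(G) \le 3v(G) - 6 - v(G)/(4M)$. The induction splits $G$ along an internal $2$-cut or across a non-triangular internal face into strictly smaller circuit graphs $G_1,\dots,G_r$; I apply the inductive hypothesis to each piece, transfer any large near-triangulation found in a piece back up to $G$, and otherwise add the edge counts. The key bookkeeping is that each split produces a controlled unit of face excess while absorbing at most $M$ vertices into a triangulated region, so covering all $v(G)$ vertices costs deficiency at least $v(G)/(4M)$; the constant $1/4$ and the constant term $-6$ are exactly what this accounting is tuned to yield.

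It remains to convert a large near-triangulation into a copy of $C_k$, which pins down $M$. Here I would combine two facts: (i) a near-triangulation contains cycles of \emph{every} length from $3$ up to its circumference, and (ii) by the circumference bound of Chen and Yu \cite{ChenYu2002}, a circuit graph on $m$ vertices has circumference at least $m^{\log_3 2}$. Taking the threshold $M$ to be (at most) $k^{\log_2 3}$ makes $M^{\log_3 2} \ge k$, so any near-triangulation on more than $M$ vertices has circumference at least $k$ and hence, by (i), contains $C_k$. Since $G$ is $C_k$-free, the first alternative of the dichotomy is impossible, and the second gives $e(G) \le 3n - 6 - n/(4M) \le 3n - 6 - n/(4k^{\log_2 3})$, which is the stated bound; the finitely many small values of $k$ and $n$ are checked directly.

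The main obstacle is establishing fact (i), the \emph{consecutiveness} of cycle lengths up to the circumference: the Chen--Yu result guarantees only one long cycle, whereas we need a cycle of the exact length $k$. Proving that a near-triangulation realizes every intermediate length requires the circuit-graph machinery, repeatedly contracting or deleting across triangular faces and internal $2$-cuts while tracking how the cycle spectrum changes in steps of size one, and then arranging the quantitative interaction between this spectrum result and the inductive density bound so that it produces the clean constant $1/4$ is the delicate part of the argument.
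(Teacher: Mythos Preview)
Your plan matches the paper's: reduce to $3$-connected (hence circuit) graphs, prove a density dichotomy (the paper's Theorem~\ref{find a triangulation}) yielding a large near-triangulation in any sufficiently dense circuit graph, apply Chen--Yu for a long cycle, and then extract a cycle of exact length $k$ (the paper's Lemma~\ref{lem: exact cycle}). You have the relative difficulties inverted, however: the exact-cycle step you flag as the main obstacle is nearly trivial---take a cycle of length at least $k$ whose interior is minimal and observe that it must have length exactly $k$, since otherwise a facial triangle on two consecutive outer edges lets you shortcut past a degree-two vertex---whereas the dichotomy you dispatch in one sentence is the paper's real technical core, proved by a careful induction on $m(G)$ (not $v(G)$) that controls how non-triangular interior faces can meet the outer cycle.
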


To prove this, we first show that it suffices to consider graphs that are close to being $3$-connected.
A \emph{circuit graph} is a pair $(G, C)$ where $G$ is a $2$-connected plane graph and $C$ is a facial cycle of $G$ so that for any $2$-cut $S$ of $G$, each component of $G - S$ contains a vertex of $C$.
We show that if there is a counterexample to Theorem \ref{main}, then there is a circuit graph counterexample.

We then show that a dense circuit graph has a large near triangulation as a subgraph, where a \emph{near triangulation} is a plane graph in which every face is bounded by a triangle except possibly the outer face. 
For a $2$-connected plane graph $G$ with outer cycle $C$ we write $m(G)$ for the number of interior edges needed to make $G$ a near triangulation, so $$m(G) = 3v(G) - 6 - e(G) - (|C| - 3).$$
We will apply the following theorem with $t = \lceil k^{\log_2 3}\rceil$.

\begin{theorem}  \label{find a triangulation}
    For all $t \ge 4$, if $(G, C)$ is a circuit graph with $v(G) \ge t$ and with outer cycle $C$ so that $m(G) < \frac{v(G) - (t-1)}{3t-7}$, then $G$ has a near triangulation subgraph $T$ with $v(T) \ge t$.
\end{theorem}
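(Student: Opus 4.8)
The plan is to induct on the defect $m(G)$, using the identity $m(G)=\sum_{F}(|F|-3)$ where the sum runs over the interior faces $F$ of $G$ and $|F|$ is the length of the cycle bounding $F$ (this follows from Euler's formula together with $\sum_F |F| = 2e(G)-|C|$). Thus $m(G)$ equals the total number of chords needed to triangulate every interior face, the number of non-triangular interior faces is at most $m(G)$, and when $m(G)=0$ the graph $G$ is itself a near triangulation, so we may take $T=G$, which has $v(G)\ge t$ vertices. It therefore suffices, when $m(G)\ge 1$, to exhibit a reduction to a circuit graph on which the hypothesis still holds and whose near triangulation subgraphs are also near triangulation subgraphs of $G$.

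The main reduction is a decomposition at a $2$-cut. First I would record that the circuit graph property forces both vertices of any $2$-cut onto $C$: if a $2$-cut $\{x,y\}$ avoided a vertex of $C$, then the connected set $C$ (or $C$ minus one vertex) would lie in a single component of $G-\{x,y\}$, so some other component would contain no vertex of $C$, contradicting the definition. Hence a $2$-cut $\{x,y\}$ splits $C$ into two arcs and, after adding a virtual edge $xy$ to each side, splits $G$ into two circuit graphs $G_1,G_2$ with $v(G_1)+v(G_2)=v(G)+2$ and, by a short computation with the formula for $m$, with $m(G_1)+m(G_2)=m(G)-1$. The crucial point is that the virtual edge lies on the outer cycle of each $G_i$, so in any near triangulation subgraph $T\subseteq G_i$ the edge $xy$ can appear only on the outer boundary of $T$; deleting it when present yields a genuine near triangulation subgraph of $G$ with the same number of vertices. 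The averaging step is then clean: if the hypothesis $m(G)<\frac{v(G)-(t-1)}{3t-7}$ held for $G$ but failed for both pieces, summing $m(G_i)\ge \frac{v(G_i)-(t-1)}{3t-7}$ and using the two identities would give
\[
m(G)-1 \;\ge\; \frac{v(G)+2-2(t-1)}{3t-7} \;=\; \frac{v(G)-(t-1)}{3t-7}+\frac{3-t}{3t-7},
\]
which rearranges to $m(G)\ge \frac{v(G)-(t-1)}{3t-7}+\frac{2t-4}{3t-7}>\frac{v(G)-(t-1)}{3t-7}$ for $t\ge 4$, a contradiction. So the hypothesis passes to some $G_i$, which then satisfies $v(G_i)\ge t$, and we recurse.

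The remaining, and I expect hardest, case is when $G$ has no $2$-cut, i.e.\ $G$ is $3$-connected, yet still has a non-triangular interior face, so the decomposition above is unavailable. Since a $3$-connected plane graph satisfies the circuit graph condition vacuously for \emph{every} choice of outer face, I would first use this freedom to re-embed $G$ with its outer face of maximum length, which can only decrease $m(G)$; the problem then becomes that of finding a large triangulated disk inside a $3$-connected plane graph with few non-triangular faces. For this I would partition the triangular interior faces into maximal triangulated regions (two triangles lying in the same region when joined by a chain of edge-adjacent triangles) and argue that, since every boundary edge of such a region abuts a non-triangular or outer face and there are at most $m(G)$ non-triangular faces, both the number of regions and the number of vertices lying in no region are controlled by $m(G)$ and $|C|$; the largest region, once verified to be a disk, is a near triangulation with at least $t$ vertices.

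Making this quantitative bound match the exact threshold $1/(3t-7)$ is where I expect the main difficulty to lie. The delicate points are controlling the length of the outer cycle $C$ in the $3$-connected case and ruling out a maximal triangulated region that wraps non-simply around a hole (so that it genuinely is a disk), and this is where I anticipate needing the full strength of $3$-connectivity together with the circuit-graph structure. If the region-counting cannot be pushed to the stated constant directly, the fallback is a local surgery that deletes at most $3t-7$ vertices around a single non-triangular face to drop $m(G)$ by one; the arithmetic above (with $v$ decreased by at most $3t-7$ and $m$ by at least one) shows that such a surgery again preserves the hypothesis, so the induction would close either way.
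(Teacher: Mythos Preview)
Your $2$-cut reduction has a genuine gap. The claim that ``deleting [the virtual edge $xy$] when present yields a genuine near triangulation subgraph of $G$ with the same number of vertices'' is false. Take $T$ to be the $4$-cycle $xyab$ with the chord $xa$: this is a near triangulation with $xy$ on its outer cycle, but $T-xy$ leaves $y$ with degree~$1$, so $T-xy$ is not $2$-connected and the only near triangulation left inside it is the triangle $xab$, which has strictly fewer vertices. In general, when the third vertex $z$ of the interior triangle on $xy$ lies on the outer cycle of $T$, deleting $xy$ creates a cut vertex and you lose vertices when passing to a block. Since your averaging is tight (you need exactly $\ge t$ vertices in the piece), you cannot afford to lose even one. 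A secondary issue: your identity $m(G_1)+m(G_2)=m(G)-1$ silently assumes $xy\notin E(G)$; when $xy$ is already an interior edge the sum is $m(G)$, and then the induction on $m$ does not decrease.

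The $3$-connected case is not a proof but a plan, and you acknowledge as much. The ``maximal triangulated region'' partition is the right picture, but neither the simple-connectivity of a region nor the quantitative bound matching $1/(3t-7)$ is argued; the proposed fallback surgery (``delete at most $3t-7$ vertices around a non-triangular face to drop $m$ by one'') is exactly the statement you would need to prove, not a way to avoid proving it.

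The paper's argument is organized differently and avoids both problems. It never adds a virtual edge: every recursive step passes to the interior of an actual cycle of $G$, so any near triangulation found downstairs is literally a subgraph of $G$. Rather than splitting at $2$-cuts of $G$, it takes a minimal counterexample in $m(G)$ and analyzes how a non-triangular interior face $F$ meets the outer cycle $C$, proving in three claims that $F\cap C$ is a path, then has no edge, then is empty; the circuit-graph condition (every $2$-cut has both vertices on $C$) is used exactly here to control the block structure around $F$. Once every bad face is disjoint from $C$, a minimality argument on two disjoint $F$--$C$ paths isolates a genuine near-triangulated sub-disk $H_F$ and finishes. If you want to salvage your outline, the cleanest fix is to drop virtual edges entirely and, at a $2$-cut $\{x,y\}$, pass to interiors of cycles built from the arcs of $C$ together with actual $x$--$y$ paths inside $G$; but once you do that, you are essentially reproducing the paper's face-based decomposition rather than a Tutte-style $2$-sum.
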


This bound is sharp, as shown by iterating the construction of Figure \ref{figure: sharp example}. 
Each iteration adds three copies of a $(t-1)$-vertex triangulation arranged in a cyclic order, which adds $3t-7$ vertices and one new interior non-triangular face of size four.

Since every near triangulation is a circuit graph we can apply the following theorem of Chen and Yu \cite{ChenYu2002}.

\begin{theorem}[\cite{ChenYu2002}]  \label{Chen-Yu}
    For all $k \ge 3$, if $(G,C)$ is a circuit graph with at least $k^{\log_2 3}$ vertices, then $G$ has a cycle of length at least $k$.
\end{theorem}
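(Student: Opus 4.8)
The plan is to prove the equivalent circumference bound directly by induction. Since $\log_2 3\cdot\log_3 2=1$, the statement is equivalent to the assertion that every circuit graph $(G,C)$ has a cycle of length at least $v(G)^{\log_3 2}$: if this holds and $v(G)\ge k^{\log_2 3}$, then the guaranteed cycle has length at least $(k^{\log_2 3})^{\log_3 2}=k$. I would therefore establish $\mathrm{circ}(G)\ge v(G)^{\log_3 2}$ by induction on $v(G)$. This target is exactly matched by the Moon--Moser stacked-triangulation construction \cite{Moon-Moser}, which already signals that the recursion should lose a factor of $3$ in the vertex count in exchange for a factor of $2$ in length.

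For the induction to close, I would strengthen the statement so that it controls a cycle through a prescribed outer edge: for every edge $e=xy$ of $C$, there is a cycle through $e$ of length at least $v(G)^{\log_3 2}$. Fixing an edge on the outer cycle is what makes the circuit-graph structure theory usable, because it lets me invoke a Tutte-path decomposition: there is an $x$--$y$ path $P$ in $G-e$ all of whose bridges attach to $P$ in at most three vertices, with at most two vertices of attachment for bridges that meet $C$. This is the Tutte/Thomassen theory specialized to circuit graphs, and is exactly the kind of tool behind \cite{ChenYu2002}. Then $P+e$ is already a cycle through $e$, but possibly a short one, so the real work is to lengthen it.

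The main step is the recursion. Each bridge of $P$, taken together with its attachment vertices and the relevant boundary arc, is again a circuit graph on fewer vertices, so I can apply the inductive hypothesis to it after inserting a virtual edge joining two of its attachment vertices on its outer cycle; deleting that virtual edge from the guaranteed cycle yields a long path across the bridge. Since the $v(G)-|V(P)|$ off-path vertices are distributed among bridges that each attach at most three times, some bridge $B$ carries a constant fraction of them, and I can reroute $P+e$ so that it detours through $B$ along such a long path. Rerouting through the heaviest of at most three pieces produces a cycle whose length obeys a recursion of the shape $f(n)\ge 2f(n/3)+O(1)$, where the ``two out of three'' reflects that a three-attachment bridge splits its region into three parts, of which a cycle can traverse two. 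Solving this recursion gives $f(n)\ge n^{\log_3 2}$, completing the induction and hence the theorem.

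The hard part will be making the recursion genuinely tight: guaranteeing that the split is into sufficiently balanced parts and that degenerate configurations do not erode the exponent. Bridges with only one or two attachments, badly unbalanced distributions of the off-path vertices, and the boundary bridges that are constrained to two attachments all need separate accounting so that each recursive call still sees at least a $1/3$ fraction of the vertices while contributing a full factor of $2$ to the length. Pinning down the precise form of the Tutte-path theorem for circuit graphs that delivers these guarantees, and checking the base cases (small $v(G)$, where $v(G)^{\log_3 2}$ is a small constant and a triangle or the outer cycle itself suffices), is where the genuine technical effort lies.
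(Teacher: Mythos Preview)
The paper does not prove this theorem at all: it is quoted as a black box from \cite{ChenYu2002} and invoked once in the proof of Theorem~\ref{main theta}. So there is no ``paper's own proof'' to compare your proposal against.

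That said, your sketch is a faithful outline of the actual Chen--Yu argument. The reformulation as $\mathrm{circ}(G)\ge v(G)^{\log_3 2}$, the strengthening to a cycle through a prescribed outer edge, the use of a Tutte path whose bridges have at most three attachments (at most two for bridges meeting $C$), and the recursion $f(n)\ge 2f(n/3)$ are all the correct ingredients. You are also right that the technical weight lies in controlling the balance of the decomposition so the exponent is not eroded; in \cite{ChenYu2002} this is handled by proving a slightly stronger inductive statement about paths between two prescribed outer vertices (not just through an edge), and by a careful case analysis on how the bridges partition the vertices. Your proposal identifies the right plan but, as you acknowledge, stops short of the delicate bookkeeping that makes the recursion close with the sharp exponent.
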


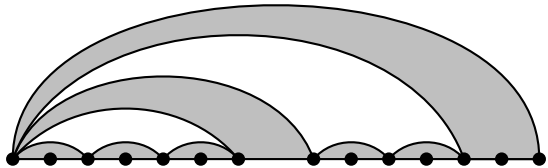
\begin{figure}
\begin{center}
\begin{tikzpicture}
\path [fill=lightgray,draw] (-4,0) to[bend left=50] (-3,0) 
    to (-4,0);
\path [fill=lightgray,draw] (-3,0) to[bend left=50] (-2,0) 
    to (-3,0);
\path [fill=lightgray,draw] (-2,0) to[bend left=50] (-1,0) 
    to (-2,0);
\path [fill=lightgray,draw] (0,0) to[bend left=50] (1,0) 
    to (0,0);
\path [fill=lightgray,draw] (1,0) to[bend left=50] (2,0) 
    to (1,0);
\path [fill=lightgray,draw] (2,0) to[bend left=50] (3,0) 
    to (2,0);
\path [fill=lightgray,draw] (-4,0) to[bend left=50] (-1,0) to (0,0)     to[bend right=70] (-4,0);
\path [fill=lightgray,draw] (-4,0) to[bend left=70] (2,0) to (3,0)     to[bend right=90] (-4,0);
     \node (1) at (-4,0) {};
     \node (2) at (-3,0) {};
     \node (3) at (-2,0) {};
     \node (4) at (-1,0) {};
     \node (5) at (0,0) {};
     \node (6) at (1,0) {};
     \node (7) at (2,0) {};
     \node (8) at (3,0) {};
     \node (10) at (-3.5,0) {};
     \node (11) at (-2.5,0) {};
     \node (12) at (-1.5,0) {};
     \node (13) at (0.5,0) {};
     \node (14) at (1.5,0) {};
     \node (15) at (2.5,0) {};
\end{tikzpicture} 
\end{center}
\caption{Each shaded region is a triangulation with $t - 1$ vertices. This graph contains no $t$-vertex near triangulation.}
 \label{figure: sharp example}
\end{figure}

Finally, it is straightforward to show (see Lemma \ref{lem: exact cycle}) that every near triangulation with a cycle of length at least $k$ also has a cycle of length exactly $k$, and therefore  Theorem \ref{main} follows from Theorems \ref{find a triangulation} and \ref{Chen-Yu}.

In fact, we will prove a result that is stronger than Theorem \ref{main}.
For each integer $k \ge 4$, we write $\Theta_k$ for the family of graphs (called \emph{theta graphs}) obtained from $C_k$ by adding a single edge, and we write $\theta_k$ for the graph obtained from $C_k$ by adding an edge that forms a triangle with two consecutive edges of $C_k$.
The number $\ex_{\cP}(n, \Theta_k)$ is known only when $k \in \{4,5,6\}$, by results of Lan, Shi, and Song \cite{Theta-free} and Ghosh, Gy\H ori, Paulos, Xiao, and Zamora \cite{GGMPX6theta}.
We prove the following, which again is tight up to the constant $1/4$ by the construction of Cranston et al \cite{CLLScounterexample}.

\begin{theorem} \label{main theta}
    $\ex_{\cP}(n, \theta_k) \le 3n - 6 - \frac{n}{4k^{\log_2 3}}$ for all $k,n \ge 4$.
\end{theorem}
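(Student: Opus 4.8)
The plan is to prove Theorem~\ref{main theta} by essentially the same chain of reductions used for Theorem~\ref{main}, observing that a $\theta_k$ subgraph is in some sense easier to force than a bare $C_k$, so the same edge bound should go through. Recall that $\theta_k$ consists of a cycle $C_k$ together with a chord that cuts off a triangle; thus a graph contains $\theta_k$ whenever it contains a $k$-cycle that has such a short chord available. The key observation I would isolate is that in a \emph{near triangulation} every cycle of length at least $4$ automatically carries such chords, because the interior of a non-triangular cycle is triangulated. This is exactly the strengthening one needs: rather than merely extracting a $C_k$ from a long cycle (as in Lemma~\ref{lem: exact cycle}), I would show that a near triangulation containing a cycle of length at least $k$ in fact contains a copy of $\theta_k$.

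\textbf{Step 1 (reduction to circuit graphs).} First I would argue, exactly as for Theorem~\ref{main}, that it suffices to prove the bound for circuit graphs. If $G$ is an $n$-vertex planar graph with more than $3n-6-\frac{n}{4k^{\log_2 3}}$ edges and no $\theta_k$, I decompose $G$ along its small cuts into $2$-connected, and then circuit-graph, pieces; since $\theta_k$ is $2$-connected, a $\theta_k$ appears in $G$ if and only if it appears in one of the circuit-graph blocks, and the edge density is preserved by a counting argument on the pieces. This yields a circuit graph counterexample $(G,C)$ of comparable density, so I may assume $(G,C)$ is a circuit graph.

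\textbf{Step 2 (extract a large near triangulation).} Setting $t = \lceil k^{\log_2 3}\rceil$ as suggested, I would check that the density hypothesis $e(G) > 3n - 6 - \frac{n}{4k^{\log_2 3}}$ forces $m(G) < \frac{v(G)-(t-1)}{3t-7}$, so that Theorem~\ref{find a triangulation} applies and produces a near triangulation subgraph $T$ with $v(T) \ge t \ge k^{\log_2 3}$. (This is the same arithmetic that drives Theorem~\ref{main}, and I expect it to be routine once the constant $1/4$ is tracked.) Since every near triangulation is a circuit graph, Theorem~\ref{Chen-Yu} gives a cycle of length at least $k$ inside $T$.

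\textbf{Step 3 (from a long cycle to $\theta_k$), the main obstacle.} The crux is upgrading the conclusion ``$T$ has a cycle of length $\ge k$'' to ``$T$ contains $\theta_k$.'' I would prove a near-triangulation analogue of Lemma~\ref{lem: exact cycle}: if a near triangulation has a cycle of length at least $k\ge 4$, then it contains $\theta_k$ as a subgraph. The natural approach is to take a cycle $\Gamma$ of minimum length among those of length $\ge k$; if $|\Gamma| = k$ I must still find the extra chord, and if $|\Gamma| > k$ I want to shorten it to length exactly $k$ while retaining a chord. Because $\Gamma$ bounds a triangulated interior, the region inside $\Gamma$ has a chord or an interior vertex adjacent to $\Gamma$; in either case one obtains two shorter cycles whose lengths sum appropriately, and a parity/interval argument lets me hit length exactly $k$ on a cycle that has an incident triangulating chord, producing $\theta_k$. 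The delicate point is ensuring the chord realizing the triangle of $\theta_k$ is genuinely present for the final length-$k$ cycle and not destroyed during the shortening; I expect this to require a careful choice of $\Gamma$ (e.g.\ minimizing enclosed area or enclosed vertices subject to $|\Gamma|\ge k$) so that any shortening step stays inside a triangulated region and hence always supplies the needed chord. Once this lemma is in hand, combining Steps~1--3 contradicts the assumption that $G$ is $\theta_k$-free, completing the proof. Finally, since $\theta_k \supseteq C_k$ only up to the added edge, Theorem~\ref{main theta} implies Theorem~\ref{main}, because any $\theta_k$ contains $C_k$ as a subgraph.
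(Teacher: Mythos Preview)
Your proposal is correct and follows essentially the same route as the paper: reduce to a $3$-connected (hence circuit-graph) counterexample, apply Theorem~\ref{find a triangulation} with $t=\lceil k^{\log_2 3}\rceil$ to extract a large near triangulation, apply Theorem~\ref{Chen-Yu} to get a cycle of length at least $k$, and then upgrade to $\theta_k$ inside the near triangulation. The only confusion is in Step~3: Lemma~\ref{lem: exact cycle} as stated in the paper already concludes that the near triangulation contains a $\theta_k$ subgraph (not merely $C_k$), so there is no separate ``analogue'' to prove---your sketched argument, choosing a cycle of length $\ge k$ with minimal interior and using the triangulated region to find the extra chord, is precisely how the paper proves that lemma.
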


This also holds with $\Theta_k$ in place of $\theta_k$ because $\theta_k \in \Theta_k$.
Interestingly, this bound may not hold if we replace $\theta_k$ with another graph in $\Theta_k$; we discuss this in more detail after Lemma \ref{lem: exact cycle}.

After describing all relevant notation, we will prove Theorem \ref{find a triangulation} in Section \ref{sec: triangulation} and then prove Theorem \ref{main theta} in Section \ref{sec: main proof}.
In Section \ref{sec: conclusion} we will discuss some consequences of Theorems \ref{main} and \ref{find a triangulation} and some related questions.

For any positive integer $k$, we let $[k]=\{1, \ldots, k\}$.   
It is well known that if $G$ is a 2-connected plane graph then each face of $G$ is bounded by a cycle of $G$ called a \emph{facial cycle}. 
The {\it interior} of a cycle $C$ in a plane graph is defined to be the subgraph of $G$ consisting of all edges and vertices of $G$ contained in the closed disc of the plane bounded by $C$.  
A path or cycle will be represented as a sequence of vertices such that consecutive vertices in the sequence are adjacent. 
For instance, $x_1x_2x_3\ldots x_k$ represents a path of length $k-1$ in which $x_ix_{i+1}$, $i\in [k-1]$, are the edges of the path, and $x_1x_2x_3\ldots x_kx_1$ represents a cycle of length $k$ with edges $x_kx_1$ and $x_ix_{i+1}$, $i\in [k-1]$. 
 For any distinct vertices $x,y$ in a graph $G$, 
 if $x,y$ are on a path $P$ then $xPy$ denotes the subpath of $P$ between $x$ and $y$. 
 For any cycle $C$ in a plane graph and any two distinct vertices $x,y$ on $C$, we use $xCy$ to denote the subpath of $C$ from $x$ to $y$ in clockwise order.

\section{Finding a large near triangulation} \label{sec: triangulation}

In this section we will prove Theorem \ref{find a triangulation}.

\begin{proof}[Proof of Theorem \ref{find a triangulation}]
Suppose that the theorem statement is false, and let $G$ be a counterexample with $m(G)$ as small as possible.
This means that $v(G) \ge t$ and $m(G) < \frac{v(G) - (t-1)}{3t-7}$ but $G$ contains no near triangulation on at least $t$ vertices.
So $m(G) > 0$, as otherwise $G$ is a near triangulation on at least $t$ vertices, a contradiction.
Moreover, $G$ is not a cycle as otherwise $m(G) = v(G) - 3$, contradicting the assumption that $m(G) < \frac{v(G) - (t-1)}{3t-7}$.
Note that if $C'$ is a cycle in $G$ and $H$ is the interior of $C'$, then $(H, C')$ is also a circuit graph.
Step-by-step, we will show that the behavior of the interior non-triangular faces of $G$ is quite restricted.

\begin{claim}
    If $F$ is a non-triangular interior facial cycle of $G$, then $F \cap C$ is either empty or is a path.
\end{claim}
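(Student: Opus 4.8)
The plan is to argue by contradiction, exploiting the circuit‑graph condition in the easy configuration and the minimality of $m(G)$ in the hard one. First I observe that, since $F$ and $C$ are both facial cycles of the plane graph $G$, the intersection $F\cap C$ is a subgraph of the cycle $C$ and hence a disjoint union of (possibly trivial) paths; so the claim can fail only when $F\cap C$ has at least two components. Assume this. Reading the components in their common cyclic order along $C$ and along $F$ (the orders agree because $F$ lies inside the closed disc bounded by $C$ and meets its boundary exactly along these paths), I pick two cyclically consecutive components $P$ and $P'$. Between them sits a \emph{gap}: an arc $C_1=aCb$ of $C$ from an endpoint $a$ of $P$ to an endpoint $b$ of $P'$ whose interior avoids $F$, together with the arc $Q=aFb$ of $F$ whose interior avoids $C$. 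These two arcs meet only in $a,b$ and so bound a \emph{pocket}, with the interior face of $F$ lying on the opposite side of $Q$.

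The clean case is when some gap is a single edge $ab\in E(C)$. Since $a$ and $b$ lie in different components of $F\cap C$, this edge is not an edge of $F$; hence the bridging arc $Q$ cannot be the single edge $ab$ and so has an interior vertex $w$, which lies off $C$. Then $\{a,b\}$ is a $2$-cut of $G$: the pocket is enclosed by the edge $ab$ and the arc $Q$, whose far sides are the outer face and the face bounded by $F$, respectively, so no path leaves the pocket except through $a$ or $b$. As the only vertices of $C$ in the closed pocket are $a$ and $b$, the component of $G-\{a,b\}$ containing $w$ meets no vertex of $C$, contradicting the defining property of a circuit graph.

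It remains to treat the case that every gap has an interior vertex, and here I would invoke the minimality of $m(G)$. Splitting $C$ at $a,b$ into $C_1$ and its complement $C_2$, the cycles $Q\cup C_1$ and $Q\cup C_2$ have interiors $H_1$ and $H_2$ that are again circuit graphs by the observation recorded before the claim. Because $Q$ separates the two sides, the interior faces of $G$ split between $H_1$ and $H_2$, giving $m(G)=m(H_1)+m(H_2)$ and $v(G)=v(H_1)+v(H_2)-v(Q)$, where $F$ lies on the $H_2$ side and contributes at least $|F|-3\ge 1$ to $m(H_2)$ (this is where non‑triangularity of $F$ is used). Neither $H_i$ can contain a $t$-vertex near triangulation, so the aim is to show that one of them satisfies $v(H_i)\ge t$ and $m(H_i)<\frac{v(H_i)-(t-1)}{3t-7}$, whence it is a counterexample with $m(H_i)<m(G)$, contradicting minimality; otherwise I feed the negated inequalities back into $m(G)<\frac{v(G)-(t-1)}{3t-7}$ together with $v(G)=v(H_1)+v(H_2)-v(Q)$.

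The hard part will be exactly this final counting. The split moves both the vertex count and the deficiency at once, and tracing the inequalities shows that the comparison closes cleanly only when the bridging arc $Q$ is long (roughly $v(Q)\ge t-1$); the genuine obstacles are a short $Q$ whose pocket is a small near triangulation (so $m(H_1)=0$ and the deficiency does not drop) and a side with fewer than $t$ vertices (where the density threshold is unavailable). I expect to resolve these by choosing $Q$ to bound an \emph{innermost} pocket, one whose interior contains no further component of $F\cap C$, so that the smaller side is structurally simplest, and, where the counting alone is not decisive, by returning to the circuit‑graph $2$-cut condition as in the clean case to rule out nontrivial trapped pockets.
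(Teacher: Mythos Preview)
Your ``clean case'' (the gap is a single edge of $C$) is correct and is a pleasant direct use of the circuit-graph condition. The problem is the general case. The two-way split $G\to (H_1,H_2)$ along a single bridging arc $Q$ keeps the offending face $F$ intact inside $H_2$, and your own identity $m(G)=m(H_1)+m(H_2)$ then shows exactly why the argument stalls: when the pocket $H_1$ is a near triangulation with $v(H_1)<t$ you have $m(H_1)=0$ and hence $m(H_2)=m(G)$, so $H_2$ cannot be a smaller counterexample. Neither of your proposed repairs closes this. ``Innermost pocket'' adds nothing, because a pocket between \emph{consecutive} components already contains no further component of $F\cap C$. And the circuit-graph $2$-cut trick from the clean case does not transfer: once the gap $C_1=aCb$ has interior vertices, those vertices lie on $C$ and sit inside the pocket, so the component of $G-\{a,b\}$ on the pocket side \emph{does} meet $C$ and no contradiction arises.

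What the paper does differently is to avoid the two-way split entirely and cut along \emph{all} of $F\cap C$ at once. Listing the vertices $x_1,\dots,x_s$ of $F\cap C$ in cyclic order and taking $T_i$ to be the interior of $x_iCx_{i+1}\cup x_iFx_{i+1}$ gives $s$ pockets whose union with $F$ is all of $G$; crucially this \emph{consumes} the face $F$, yielding
\[
m(G)=(|F|-3)+\sum_i m(T_i)\quad\text{and}\quad v(G)\le \sum_i v(T_i)-s,
\]
so every $m(T_i)<m(G)$ automatically. The remaining work is pure counting: one first checks $(3t-7)(|F|-3)+(t-1)\ge s(t-2)$, which both forces some $T_i$ to have at least $t$ vertices and lets a weighted-average comparison show that the best such $T_i$ satisfies $\frac{v(T_i)-(t-1)}{m(T_i)}>3t-7$. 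This is exactly the ``final counting'' you anticipated, but it only closes because the face $F$ has been removed from all the pieces; with your split the term $|F|-3$ never appears on the deficiency side, and the inequalities cannot be made to meet.
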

\begin{proof}
Suppose that there is a non-triangular interior facial cycle $F$ so that $F \cap C$ has more than one component.
Note that $m(G) \ge |F| - 3 $.
Let $x_1, x_2, \dots, x_s$ be the vertices in $F \cap C$ listed in clockwise order on $C$.
Note that $s \ge 2$ since $F \cap C$ has more than one component.
For each $i \in [s]$, let $C_i' = x_i C x_{i+1} \cup x_i F x_{i+1}$, where $x_{s+1} = x_1$. 
If $C_i'$ is not a cycle then let $T_i=C_i'$; in this case $T_i$ has just one edge.
If $C_i'$ is a cycle then let $T_i$ be the interior of $C_i'$. 
Let $J \subseteq [s]$ so that $C_i'$ is a cycle if and only if $i \in J$.
Then $J$ is non-empty, as $G$ is not a cycle.
Note that $(T_i, C_i')$ is a circuit graph for all $i \in J$, and that $m(T_i) < m(G)$ for all $i \in J$ because $F$ is not a facial cycle of $T_i$.
We will argue that some $T_i$ with $i \in J$ contradicts the choice of $G$.

We first claim that there is some $i \in J$ so that $v(T_i) \ge t$.
Suppose not.
Then $v(G) \le s(t-2)$.
We will show that 
\begin{align}
    (3t-7) \cdot (|F| - 3) + (t-1) \ge s(t - 2).
\end{align}
If $s \le 4$, then 
$$(3t-7)\cdot (|F| - 3) + (t-1) \ge 4(t-2) \ge s(t - 2),$$
and if $s \ge 5$, then 
$$(3t-7)\cdot (|F| - 3) + (t-1) \ge (3t-7)(s - 3) + (t-1) \ge s(t-2).$$
So (1) holds.
Then 
$$(3t-7) \cdot m(G) + (t-1) \ge (3t-7)\cdot (|F| - 3) +(t-1) \ge  v(G),$$
a contradiction.
So $v(T_i) \ge t$ for some $i \in J$.

Let $J' \subseteq J$ so that $v(T_i) \ge t$ if and only if $i \in J'$. 
Note that $m(T_i) > 0$ for all $i \in J'$ or else $T_i$ is a near triangulation with at least $t$ vertices, a contradiction.
Also note that 
\begin{align}
    m(G) &= (|F| - 3) + \sum_{i \in [s]}{m(T_i)} \ge (|F| - 3) + \sum_{i \in J'}{m(T_i)}  \\
    v(G) &\le \Big(\sum_{i \in [s]}{v(T_i)}\Big) - s \le (s - |J'|)(t - 1)  +  \Big(\sum_{i \in J'}{v(T_i)}\Big)  - s,
\end{align}
where we subtract $s$ in line (3) because $x_i$ is counted twice for each $i \in [s]$.
Choose $j \in J'$ so that $\frac{v(T_j) - (t-1)}{m(T_j)}$ is maximized; this is well-defined because $m(T_j) > 0$ when $j \in J'$.
Then we have
\begin{align}
    \frac{v(T_j) - (t-1) }{m(T_j)} &\ge \frac{\sum_{i \in J'} \big(v(T_i) - (t-1)\big)}{\sum_{i \in J'} m(T_i)} \\
    &\ge \frac{\Big(\sum_{i \in J'} v(T_i)\Big) - |J'|(t-1)}{\sum_{i \in J'} m(T_i)} \\
    &\ge \frac{v(G) - |J'|(t-1) - (s-|J'|)(t - 1) + s}{m(G) - (|F| - 3)} \\
    &= \frac{\big(v(G) - (t-1)\big) - s(t - 2) + t - 1}{m(G) - (|F| - 3)}\\
    &\ge \frac{v(G) - (t-1)}{m(G)},
\end{align}
where line (6) follows from (2) and (3), and line (8) follows from (1) and the assumption that $\frac{v(G) - (t-1)}{m(G)} > 3t-7$.
Hence $\frac{v(T_j) - (t-1)}{m(T_j)} > 3t-7$.
But since $m(T_j) < m(G)$ and $v(T_j) \ge t$, this contradicts that $m(G)$ is as small as possible.
\end{proof}

Now every interior non-triangular face is either disjoint from $C$ or meets $C$ in a path.
We next show that such a path has no edges.
Recall the assumption that $v(G) > (3t-7) \cdot m(G) + (t-1)$.

\begin{claim}
    No interior non-triangular face is incident with an edge of $C$.
\end{claim}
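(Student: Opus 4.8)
The plan is to argue by contradiction using the minimality of $m(G)$, excising the offending face and applying the hypothesis to a smaller circuit graph. Suppose some non-triangular interior face $F$ is incident with an edge of $C$. By Claim 1, $F\cap C$ is a path $A=x_1x_2\cdots x_r$ with $r\ge 2$. Each internal vertex $x_2,\dots,x_{r-1}$ has degree $2$: the only faces meeting such an $x_i$ are $F$ and the outer face, and both $x_{i-1}x_i$ and $x_ix_{i+1}$ lie on $F$, so there is no room for any other edge at $x_i$. Writing $F=A\cup Q$, where $Q$ is the interior subpath of $F$ from $x_r$ to $x_1$, and letting $B$ be the other $x_r$--$x_1$ arc of $C$, I would form the cycle $C'=B\cup Q$ and let $H$ be its interior. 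By the observation recorded at the start of the proof, $(H,C')$ is again a circuit graph, and $H\subseteq G$.

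Next I would compare the two graphs. Passing to $H$ deletes exactly the $r-2$ degree-$2$ vertices of $A$ and the $r-1$ edges of $A$, so $v(H)=v(G)-(r-2)$ and $e(H)=e(G)-(r-1)$, while the new outer cycle has $|C'| = |C|+|F|-2(r-1)$ edges. Substituting these into $m(\cdot)=3v-6-e-(|\text{outer cycle}|-3)$, the terms involving $r$ cancel and one obtains $m(H)=m(G)-(|F|-3)$, which is strictly smaller than $m(G)$ because $F$ is non-triangular. It then remains to check that $(H,C')$ still satisfies the hypotheses of the theorem, so that minimality applies. The density requirement $m(H)<\frac{v(H)-(t-1)}{3t-7}$ reduces, after feeding the expressions above into the assumption $v(G)>(3t-7)m(G)+(t-1)$, to the single inequality $r-2\le(3t-7)(|F|-3)$; since $A$ is a subpath of the cycle $F$ we have $r\le|F|$, and together with $|F|\ge 4$ and $t\ge 4$ this holds. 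The density condition in turn forces $v(H)>t-1$, so $v(H)\ge t$, because $m(H)\ge 0$.

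Having verified that $(H,C')$ meets the hypotheses with $m(H)<m(G)$, the choice of $G$ implies that $H$ is not a counterexample, so $H$ contains a near triangulation on at least $t$ vertices; since $H\subseteq G$, this is also a near triangulation subgraph of $G$, contradicting the choice of $G$. The main obstacle is the bookkeeping in the middle step: one must confirm that excising $F$ decreases $m$ quickly enough relative to $v$ so that the density condition survives, which is precisely the inequality $r-2\le(3t-7)(|F|-3)$. A little care with degenerate cases (for instance $r=2$, or $B$ consisting of a single edge) is also needed to ensure that $C'$ is a genuine simple cycle, but simplicity of $G$ keeps the single edge of $B$ and the path $Q$ from coinciding.
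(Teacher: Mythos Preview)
Your argument is correct and follows the same construction as the paper: excise the path $A=F\cap C$ to obtain the circuit graph $(H,C')$, then invoke minimality. Your bookkeeping is in fact tidier than the paper's. You compute $m(H)=m(G)-(|F|-3)$ exactly and reduce the survival of the density hypothesis to the single inequality $r-2\le(3t-7)(|F|-3)$, which follows at once from $r\le|F|$, $|F|\ge 4$, and $t\ge 4$; you then read off $v(H)\ge t$ directly from $m(H)\ge 0$. The paper instead splits into the cases $r\le 4$ and $r\ge 5$ (with $r$ the number of internal vertices of the path) and uses the cruder bounds $m(H)\le m(G)-1$ and $m(H)\le m(G)-(r-1)$ respectively, handling $v(H)\ge t$ by a separate case analysis. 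Your uniform treatment avoids this and is preferable.
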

\begin{proof}
Suppose there is an interior non-triangular facial cycle $F$ so that $F \cap C$ has at least one edge.
Then $F \cap C$ is a subpath $P$ of $C$ for which each internal vertex has degree 2 in $G$.
Let $r$ be the number of internal vertices of $P$, and let $H$ be the subgraph of $G$ obtained from deleting all internal vertices of $P$, or deleting the edge of $P$ if $r = 0$.
Note that the outer face of $H$ is bounded by a cycle $C'$ since $F \cap C$ is a path, and therefore $(H, C')$ is a circuit graph.
Also, $m(H) < m(G)$ because $F$ is not a facial cycle of $H$.

We claim that $v(H) \ge t$.
Suppose not.
Since $v(G) \ge t$ this implies that $r \ge 1$.
If $r \le 4$ then $v(G) \le t + 3 \le 4(t-2)\le (3t-7) \cdot m(G) + (t-1)$, which contradicts the assumption that $(3t-7) \cdot m(G) + (t-1) < v(G)$.
So $r \ge 5$.
Using the estimate $m(G) \ge |F| - 3 \ge r - 1$, we have
\begin{align*}
    v(G) &\le t - 1 + r \\
    &\le t - 1 + r + \big(3rt - 3t - 8r + 8\big) \\
    &= (3t-7)(r - 1) + (t-1) \\
    &\le (3t-7) \cdot m(G) + (t-1),
\end{align*}
a contradiction.
Therefore $v(H) \ge t$.

Since $v(H) \ge t$, it follows that $(3t-7)\cdot m(H) + (t-1) \ge v(H)$ or else $(H, C')$ contradicts the choice of $(G, C)$ with $m(G)$ minimum.
But if $r \le 4$ then 
\begin{align*}
    (3t-7) \cdot m(H) + (t-1) &\le (3t-7) \cdot (m(G) - 1) +(t-1)\\
    &< v(G) - (3t-7) \\
    &\le v(G) - 4 \\
    &\le v(G) - r = v(H),
\end{align*}
which is a contradiction.
If $r \ge 5$ then $m(H) \le m(G) - (r - 1)$ because $F$ is not a facial cycle of $H$, and so
\begin{align*}
    (3t-7) \cdot m(H) + (t-1) &\le (3t-7) \cdot \big(m(G) - (r - 1)\big) + (t-1) \\
    &< v(G) - (3t-7)(r - 1) \\
    & = v(G) - r - \big(3tr - 8r - 3t + 7\big) \\
    &\le v(G) - r \\
    &= v(H),
\end{align*}
again, a contradiction.
\end{proof}

Now every interior non-triangular face meets $C$ in at most one vertex.
In fact, something stronger is true.

\begin{claim} \label{claim: face meets C}
    No interior non-triangular face of $G$ meets $C$.
\end{claim}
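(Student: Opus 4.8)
The plan is to mimic the proof structure of the previous two claims, pushing the same minimality-of-$m(G)$ argument one step further. Claim 0.2 (the first unnumbered claim) shows that an interior non-triangular face $F$ meets $C$ in at most one component, Claim 0.3 shows that this intersection contains no edge, so together they leave open only the case where $F$ meets $C$ in exactly one vertex. The goal of Claim 0.4 is to rule out even this. So I would suppose for contradiction that some interior non-triangular facial cycle $F$ meets $C$ in a single vertex $v$, and then construct a smaller circuit graph that contradicts the choice of $(G,C)$.

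The key idea is to use the vertex $v$ as a cut vertex relative to $F$: since $F$ is an interior face touching $C$ only at $v$, the cycle $F$ together with the two arcs of $C$ emanating from $v$ should split $G$ into pieces, each of which is the interior of a cycle through $v$. Concretely, I would write $C = v C v$ (the whole outer cycle based at $v$) and consider the subgraph bounded by $F$ and $C$; because $F$ is interior and touches $C$ only at $v$, I expect $F \cup C$ to bound a region whose interior $H_1$ is a circuit graph with outer cycle $C$, while the ``outside'' of $F$ within $C$ need not exist — the subtle point is that $F$ being an \emph{interior} face means everything is already inside $C$. So instead I would delete the interior of $F$ (or contract/reroute so that $F$ is filled or removed) to obtain $H$ with $m(H) < m(G)$, since $F$ is no longer a non-triangular face of $H$, paralleling the reductions in Claims 0.2 and 0.3. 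The arithmetic would then follow the same template: either $v(H) \ge t$, in which case $(H,C)$ with $m(H) < m(G)$ directly contradicts minimality via the inequality $(3t-7)m(H) + (t-1) \ge v(H)$, or $v(H) < t$, in which case I bound $v(G)$ in terms of $|F|$ and $m(G) \ge |F| - 3$ to contradict the hypothesis $v(G) > (3t-7)m(G) + (t-1)$.

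The main obstacle I anticipate is the topological/structural step of cleanly describing the reduction when $F$ meets $C$ at a single vertex $v$. Unlike the path case in Claim 0.3, there is no chain of degree-$2$ vertices to delete, and unlike Claim 0.2 there is no pair of distinct intersection vertices to split along. The natural move is to split $G$ at $v$ into the part of $G$ interior to $F$ and the part exterior to $F$, treating these as two circuit graphs glued at the single vertex $v$; then I would apply the maximizing-ratio argument of Claim 0.2 to whichever piece has the larger value of $\frac{v(T_i) - (t-1)}{m(T_i)}$. I expect one of these two pieces to have at least $t$ vertices (proved by the same counting as in the earlier claims, using $v(G) \le v(T_1) + v(T_2) - 1$ since only $v$ is shared), and for that piece to satisfy $\frac{v(T_i)-(t-1)}{m(T_i)} > 3t-7$ with $m(T_i) < m(G)$, contradicting minimality.

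The remaining care is to verify that each piece is genuinely a circuit graph — that its outer boundary is a cycle through $v$ and that any $2$-cut separates no component from the boundary cycle. This should follow from the observation stated early in the proof that the interior of any cycle $C'$ in $G$ inherits the circuit-graph property as $(H, C')$, applied to the two cycles formed by $F$ together with each of the two arcs of $C$ at $v$. Once the two pieces are exhibited as circuit graphs with $m(T_i) < m(G)$, the inequalities are essentially identical to those already carried out in Claim 0.2, so I would present the geometric splitting carefully and then invoke the earlier counting almost verbatim.
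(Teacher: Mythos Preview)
Your proposal has a genuine gap at the geometric decomposition step. When an interior non-triangular facial cycle $F$ meets $C$ in exactly one vertex $x$, the picture is \emph{not} two lobes glued at $x$. The interior of $F$ is empty (it is a face), and the region of the disc bounded by $C$ that lies outside $F$ is a single annulus pinched at $x$, not two pieces. There is no pair of cycles ``formed by $F$ together with each of the two arcs of $C$ at $x$'': since $x$ is a single point, $C-x$ is one arc and $F-x$ is one arc, so $F\cup C$ is a figure-eight bounding only one nontrivial region inside $C$. Consequently neither of your reductions works: deleting the interior of $F$ deletes nothing, and ``splitting $G$ at $x$ along $F$'' does not yield two circuit graphs $T_1,T_2$ with $v(G)\le v(T_1)+v(T_2)-1$.

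The paper's argument supplies exactly the missing structural step, and it uses the circuit-graph hypothesis essentially. The edges at $x$ are split by the two $F$-edges into two sets $Y$ and $Z$ (one on each side). If $G-Y$ were still $2$-connected, it would be a circuit graph on the same vertex set with $m(G-Y)<m(G)$, contradicting minimality; likewise for $G-Z$. Hence $G-x$ is not $2$-connected, and by the circuit property its cut vertices lie on $C$, giving vertices $v_1,v_2\in V(C)$ on either side of the block $B$ of $G-x$ containing $F-x$. Only now can $G$ be decomposed into \emph{three} circuit graphs $H_1,H_2,B$ with $m(G)=m(H_1)+m(H_2)+m(B)+(|F|-3)$ and $V(G)=V(H_1)\cup V(H_2)\cup V(B)$, after which the ratio-maximizing inequality you describe goes through (with the bound $v(G)\le 3(t-2)$ replacing your $2(t-1)$). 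The production of $v_1,v_2$ via the $G-Y$, $G-Z$ trick is the idea your plan is missing.
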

\begin{proof}
    Suppose that an interior non-triangular facial cycle $F$ of $G$ meets $C$.
    Then $|F \cap C| = 1$ by the previous two claims; let $x \in V(F \cap C)$.
    Let $y_1, y_2, \dots, y_s$, $z_1, z_2, \dots, z_r$ be the neighbors of $x$ listed in clockwise order around $x$ where $xy_1$ and $xz_r$ are edges of $C$ and $xy_s$ and $xz_1$ are edges of $F$.
    Let $Y = \{xy_i \colon i \in [s]\}$ and let $Z = \{xz_i \colon i \in [r]\}$.
    
    Since $m(G - Y) < m(G)$ (because $F$ is not a facial cycle of $G - Y$) and $v(G - Y) = v(G)$  it follows from the minimality of $m(G)$ that $G - Y$ is not $2$-connected.
    Similarly, $G - Z$ is not $2$-connected.
    Both statements imply that $G - x$ is not $2$-connected.
    Since $x$ is on $C$, every cut vertex of $G - x$ is also on $C$ because $(G, C)$ is a circuit graph.
    Since every $2$-cut of $G$ has both vertices on $C$ but $F$ has only one vertex on $C$, there is a  unique block $B$ of $G - x$ that contains $F- x$ and satisfies $B \cap (C - x) \ne \varnothing$.
    Since $G - Y$ is not $2$-connected, there is a cut vertex $v_1$  of $G - x$ in $B$. 
    Similarly, there is a cut vertex $v_2$ of $G - x$ in $B$.
    Note that $v_1, v_2$ are on $C$ and that $x, y_1, v_1, v_2, z_r$ appear in this order clockwise around $C$, by the definitions of $Y$ and $Z$.
    Also, $v_1 \ne v_2$ or else the cut $\{x, v_1\}$ contradicts that $(G, C)$ is a circuit graph.

    Let $C_B$ be the outer cycle of $B$.
    Then $C_1 := y_s C_B v_1 \cup x C v_1 \cup xy_s$ is a cycle of $G$; let $H_1$ be the interior of $C_1$.
    Similarly, $C_2:=v_2 C_B z_1 \cup v_2 C x \cup xz_1$ is a cycle of $G$; let $H_2$ be the interior of $C_2$.
    Note that $H_1$ and $H_2$ intersect only at $x$ and that each of $(H_1, C_1)$, $(H_2, C_2)$, $(B, C_B)$ is a circuit graph.
    Also, $V(G) = V(H_1) \cup V(H_2) \cup V(B)$ and $m(G) = m(H_1) + m(H_2) + m(B) + |F| - 3$, because every interior face of $G$ other than the face bounded by $F$ is an interior face of $H_1$, $H_2$, or $B$.

    Note that $\max\{v(H_1), v(H_2), v(B)\} \ge t$, or else $v(G) \le 3(t-2) \le (3t-7) \cdot m(G) + (t-1)$, a contradiction.
    Let $H_3 = B$, and let $S \subseteq \{1,2,3\}$ so that $v(H_i) \ge t$ if and only if $i \in S$.
    Then $m(H_i) > 0$ for all $i \in S$, or else $H_i$ is a near triangulation in $G$ with at least $t$ vertices.
    Choose $j \in S$ so that $\frac{v(T_j) - (t-1)}{m(T_j)}$ is maximized.
    Then
    \begin{align*}
        \frac{v(H_j) - (t-1)}{m(H_j)} &\ge \frac{\sum_{i \in S} \big(v(H_i) - (t-1)\big)}{\sum_{i \in S} m(H_i)} \\
        &\ge \frac{v(G) - 3(t-2)}{m(G) - 1} \\
        &= \frac{v(G) - (t-1) - (2t- 5)}{m(G) - 1}.
    \end{align*}
    Since $(3t - 7)\cdot m(G) < v(G) - (t-1)$ by assumption, we have
    \begin{align*}
        (3t-7)\cdot (m(G) - 1) < v(G) -(t-1) - (3t-7) \le v(G) -(t-1) - (2t-5),
    \end{align*}
    and it follows that $\frac{v(H_j) - (t-1)}{m(H_j)} > 3t-7$.
    But since $v(H_j) \ge t$ and $m(H_j) < m(G)$, this is a contradiction.
\end{proof}

We now finish the proof.
For each interior non-triangular facial cycle $F$ of $G$, choose vertices $x_1,x_2$ on $C$ and $y_1, y_2$ on $F$ and an $x_i$-$y_i$ path $P_i$ for each $i \in [2]$ so that $P_1$ and $P_2$ are vertex-disjoint and both internally disjoint from $C\cup F$, and the graph $H_F$ bounded by the cycle $C_F:=P_1 \cup y_1 F y_2 \cup P_2 \cup x_1 C x_2$ is minimal (with respect to subgraph containment).
Let $F$ be an interior non-triangular facial cycle of $G$ so that $H_F$ is minimal over all choices of $F$.
Note that $H_F$ is $2$-connected because every face of $H_F$ is bounded by a cycle, and so $(H_F, C_F)$ is a circuit graph.

Suppose $H_F$ has an interior non-triangular facial cycle $F'$. Since $H_F$ is $2$-connected, there are vertex-disjoint paths $Q_1$ and $Q_2$ in $H_F$ from $F'$ to $z_1,z_2\in V(C_F)$, respectively, that are internally disjoint from $C_F\cup F'$. Then $Q_1,Q_2$ can be extended along $C_F$ to disjoint paths from $F'$ to $x_1Cx_2$ giving rise to an $H_{F'}$ that is a proper subgraph of $H_F$, a contradiction. Therefore $H_F$ is a near triangulation.

Then $v(H_F)<t$, as $G$ contains no near triangulation on at least $t$ vertices. 
Let $G'$ be obtained from $G$ by deleting all vertices and edges of $H_F$ that are not on $P_1 \cup P_2$.
The outer face of $G'$ is bounded by the cycle $C' = P_1 \cup y_2 F y_1 \cup P_2 \cup x_2 C x_1$ and so $(G', C')$ is a circuit graph.
If $v(G') < t$, then $v(G) < 2t$ and $m(G) \ge \frac{v(G) - (t-1)}{3t-7}$, a contradiction.
So $v(G') \ge t$.
Also, $m(G') < m(G)$ because $F$ is not a face of $G'$, and it follows from $(3t-7) \cdot m(G) +(t-1) < v(G)$ and $v(H_F) <t$ that $(3t-7) \cdot m(G') + (t-1)< v(G')$.
But then $G'$ contradicts the choice of $G$.
\end{proof}

\section{The main proof} \label{sec: main proof}

In this section we will prove Theorem \ref{main theta}, which directly implies Theorem \ref{main}.
We need the following lemma.

\begin{lemma} \label{lem: exact cycle}
    If $G$ is a near triangulation with a cycle of length at least $k$, then $G$ has a $\theta_k$ subgraph.
\end{lemma}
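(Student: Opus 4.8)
The plan is to reduce to a near triangulation bounded by a short cycle and then to locate a triangular ear on a cycle of length exactly $k$. First I would replace the given cycle of length at least $k$ by the subgraph $R$ consisting of that cycle $C$ together with everything drawn inside it. Since $G$ is a near triangulation, every bounded face of $R$ is a triangle, so $R$ is a near triangulation with outer cycle $C$ of length $m \ge k$, and any $\theta_k$ found in $R$ is a $\theta_k$ in $G$. Among all cycles of length at least $k$ in $G$, I would choose $C$ so that its interior $R$ has as few bounded faces as possible.

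Next I would use two local reductions, each of which decreases the number of bounded faces while keeping a cycle of length at least $k$. If some edge $uv$ of $C$ lies on an interior face $uvw$ with $w \notin V(C)$, then replacing $uv$ by the path $uwv$ yields a cycle $C'$ of length $m+1 \ge k$ whose interior is $R$ with the edge $uv$ deleted, and this has exactly one fewer bounded face; by the minimal choice of $R$ this cannot happen, so the apex of every boundary face lies on $C$. Under this condition the key claim below produces an \emph{ear}, that is, three consecutive vertices $x,y,z$ of $C$ with $xz \in E(R)$ and $\deg_R(y)=2$. If $m>k$, then deleting $y$ gives a near triangulation with a cycle of length $m-1 \ge k$ and one fewer bounded face, contradicting minimality; hence $m=k$, and the ear is precisely the chord of the $k$-cycle $C$ that creates $\theta_k$, so $\theta_k \subseteq R \subseteq G$.

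The crux is the following claim: a near triangulation with outer cycle $C$ of length at least $4$, in which the apex of every boundary face lies on $C$, contains an ear. To prove it I would take an \emph{innermost} chord $xy$, one that bounds together with an arc $A$ of $C$ a region containing no further chord of $C$, with $A$ as short as possible; such a chord exists because $|C| \ge 4$ forces some boundary face to use a chord. Writing $b$ for the neighbor of $x$ on $A$, the face on the edge $xb$ has its apex $w'$ on $C$ and inside this region, so $w' \in V(A) \cup \{y\}$; the innermost choice rules out every possibility except $w'=y$ with $A$ of length two, and the hypothesis on apices then forces $xby$ to be a face, giving the ear at $b$. I expect this claim to be the main obstacle, since one must rule out the configuration in which every boundary edge is cut off by a long chord; the innermost-chord argument is exactly what excludes this, and it is where planarity (the non-crossing of chords) enters. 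Finally, because $\theta_k$ contains $C_k$, the same argument supplies the cycle of length exactly $k$ needed for Theorem \ref{main}.
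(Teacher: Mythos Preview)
Your proof is correct and follows essentially the same strategy as the paper's: choose a cycle of length at least $k$ with minimal interior, rule out boundary faces whose apex is interior, find an ear, and then either shorten the cycle or read off $\theta_k$. The only difference is in locating the ear: the paper asserts by induction on $v(H)$ a dichotomy yielding two non-adjacent degree-$2$ vertices on $C$, whereas you extract a single ear directly via an innermost-chord argument; since one ear already suffices to produce $\theta_k$ once $|C|=k$, this is a harmless (and arguably cleaner) variation rather than a genuinely different route.
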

\begin{proof}
    Let $C$ be a cycle of $G$ with length at least $k$ so that the interior of $C$ is as small as possible.
    Then $C$ bounds a near triangulation $H$.
    It is straightforward to prove by induction on $v(H)$ that either
    \begin{enumerate}[(i)]
    \item there is an edge $e = uv$ of $C$ in a facial triangle that also contains an interior vertex $x$ of $H$, or
    
    \item there are edge-disjoint subpaths of $C$, each of length 2 and in a facial triangle, which gives two non-adjacent vertices of degree 2 in $H$.
    \end{enumerate}
    If (i) holds, then the cycle obtained from $C - uv$ by adding the path $uxv$ contradicts the minimality of the interior of $C$.
    So (ii) holds.
    If $|C| > k$, then the cycle obtained from $C$ by deleting a degree 2 vertex and adding the edge between its neighbors contradicts the minimality of the interior of $C$.
    Therefore $|C| = k$, and (ii) implies that $H$ has a $\theta_k$-subgraph.
\end{proof}

We comment that Lemma \ref{lem: exact cycle} is not true for all graphs in $\Theta_k$, as shown by the graph in Figure \ref{figure: theta graphs}.
This is the reason why Theorem \ref{main theta} may not hold for all graphs in $\Theta_k$.
In particular, the graph in Figure \ref{figure: theta graphs} has no subgraph consisting of a 12-cycle with a chord joining two vertices of distance 6.

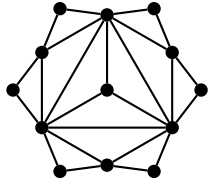
\begin{figure}
\begin{center}
 \begin{tikzpicture}
     \node[label = {}] (1) at (0:0) {};
     \node (2) at (90:\R) {};
     \node (3) at (210:\R) {};
     \node (4) at (330:\R) {};
     \node (5) at (90+60:\R) {};
     \node (6) at (90+120+60:\R) {};
     \node (7) at (90+240+60:\R) {};
     
     \node (8) at (90+30:5\R/4) {};
     \node (9) at (90+60+30:5\R/4) {};
     \node (10) at (210+30:5\R/4) {};
     \node (11) at (270+30:5\R/4) {};
     \node (12) at (330+30:5\R/4) {};
     \node (13) at (60:5\R/4) {};
\draw (1) -- (2);
\draw (2) -- (3);
\draw (3) -- (1);
\draw (1) -- (4);
\draw (2) -- (4);
\draw (3) -- (4);

\draw (2) -- (5);
\draw (2) -- (7);
\draw (3) -- (5);
\draw (3) -- (6);
\draw (4) -- (6);
\draw (4) -- (7);

\draw (8) -- (2);
\draw (8) -- (5);
\draw (9) -- (5);
\draw (9) -- (3);
\draw (10) -- (3);
\draw (10) -- (6);
\draw (11) -- (6);
\draw (11) -- (4);
\draw (12) -- (4);
\draw (12) -- (7);
\draw (13) -- (7);
\draw (13) -- (2);
\end{tikzpicture}
\end{center}
\caption{A near triangulation with a cycle of length $12$ but not every graph in $\Theta_{12}$ as a subgraph.}
 \label{figure: theta graphs}
\end{figure}

We are now ready to prove our main result.

\begin{proof}[Proof of Theorem \ref{main theta}]
    Fix $k \ge 4$, and let $t = \lceil k^{\log_2 3} \rceil$.
    We will show that $\ex_{\cP}(n, \theta_k) \le 3n - 6 - \frac{n}{4(t-2)}$, which implies that $\ex_{\cP}(n, \theta_k) \le 3n - 6 - \frac{n}{4k^{\log_2 3}}$.
    Suppose there is a $\theta_k$-free plane graph $G$ with $v(G) \ge t$ and $e(G) > 3v(G) - 6 - \frac{v(G)}{4(t-2)}$, and assume that $G$ has no proper subgraph with these properties.
    We will show that $G$ has a $\theta_k$-subgraph.
    We first reduce to the $3$-connected case.

    \begin{claim}
        $G$ is $3$-connected.
    \end{claim}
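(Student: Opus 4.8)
The plan is to prove the claim by contradiction, exploiting the minimality of $G$. Suppose $G$ is not $3$-connected. Since $G$ is $\theta_k$-free and satisfies $e(G) > 3v(G) - 6 - \frac{v(G)}{4(t-2)}$, it is dense; in particular $G$ must be $2$-connected, since a graph with a cut vertex (or worse) can be split at that vertex into smaller pieces whose edge-counts sum in a way that would force one piece to violate the same density bound, contradicting minimality. So I may assume $G$ is $2$-connected but has a $2$-cut.

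Let $\{u,v\}$ be a $2$-cut of $G$, and let the components of $G - \{u,v\}$ together with $u,v$ give rise to ``bridges'' $G_1, G_2, \dots, G_r$ (with $r \ge 2$), where each $G_i$ shares exactly $\{u,v\}$ with the others. The natural move is to form the graphs $G_i^+ = G_i + uv$ (adding the edge $uv$ if it is not already present), so that each $G_i^+$ is a plane graph on $v(G_i)$ vertices. I would like to argue that the density bound is inherited by at least one $G_i^+$, forcing that piece to be a proper subgraph of $G$ with the same two properties ($\theta_k$-free, too many edges, at least $t$ vertices), contradicting minimality. The arithmetic here is the routine part: summing $e(G_i^+) = e(G) + (r-1)\cdot[\text{correction for } uv]$ and $\sum v(G_i) = v(G) + 2(r-1)$, and checking that if every $G_i^+$ satisfied $e(G_i^+) \le 3v(G_i^+) - 6 - \frac{v(G_i^+)}{4(t-2)}$ then $G$ would satisfy the bound with the opposite inequality.

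The main obstacle, and the place where genuine care is needed, is \emph{twofold}. First, I must confirm that $G_i^+$ is genuinely $\theta_k$-free: adding the edge $uv$ could in principle create a $\theta_k$ even though neither $G_i$ nor $G$ contained one. The key observation should be that any $\theta_k$ in $G_i^+$ using the new edge $uv$ corresponds to a $u$--$v$ path structure in $G_i$, which can be completed through a \emph{different} bridge $G_j$ (using a $u$--$v$ path there) to produce a $\theta_k$ already present in $G$ --- so the hypothesis that $G$ is $\theta_k$-free rules this out. This is exactly the kind of argument where planarity and the theta-graph structure ($C_k$ plus one chord) must be used delicately, since I need the pieces to glue back into a forbidden subgraph of $G$. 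Second, I must handle the vertex-count bookkeeping so that the piece I extract still has at least $t$ vertices (or else the density statement is vacuous for it); the smaller bridge might be tiny, so I expect to argue that the \emph{larger} bridge inherits the bad edge-density while the smaller bridge contributes boundedly many edges.

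Concretely, the order of steps I would carry out is: (1) rule out cut vertices to establish $2$-connectivity; (2) fix a $2$-cut $\{u,v\}$ and define the augmented bridges $G_i^+$; (3) prove each $G_i^+$ is $\theta_k$-free using the gluing argument above, where the hard case is a theta using the edge $uv$; (4) do the edge/vertex arithmetic to show that if all pieces met the bound then $G$ would too, deducing that some $G_i^+$ violates the bound; (5) verify this offending $G_i^+$ has at least $t$ vertices and is a proper subgraph, contradicting the minimal choice of $G$. I anticipate step (3) to be the crux, since steps (1), (4), and (5) are standard density and counting manipulations, whereas controlling the forbidden subgraph under the $+uv$ operation requires the structural interplay between the cut and the theta graph.
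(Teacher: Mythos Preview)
Your plan has a genuine gap in exactly the place you flagged as the crux. The gluing argument in step~(3) does not work for $\theta_k$ (or even for $C_k$): if $G_i^{+}$ contains a copy of $\theta_k$ that uses the new edge $uv$, then removing $uv$ leaves a subgraph of $G_i$ which, together with a $u$--$v$ path of some length $\ell \ge 2$ through another bridge $G_j$, yields a cycle of length $k-1+\ell$ with a chord, not a $\theta_k$; and if $uv$ happened to be the chord, you recover $C_k$ in $G_i$ plus a $u$--$v$ path of length $\ell \ge 2$, which again is not $\theta_k$. So you cannot conclude that $G$ contains $\theta_k$, and hence you cannot certify that $G_i^{+}$ is $\theta_k$-free. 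There is a second problem: the minimality hypothesis in force here is that $G$ has no \emph{proper subgraph} with the same properties, and $G_i^{+}$ is not a subgraph of $G$ once you add a new edge, so even if step~(3) succeeded you could not invoke minimality on $G_i^{+}$.

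The fix, which is what the paper does, is simply not to add the edge $uv$ at all. Write $G = G_1 \cup G_2$ with $|V(G_1)\cap V(G_2)| \le 2$; then each $G_i$ is a proper subgraph of $G$ and hence automatically $\theta_k$-free, so minimality gives directly that for each $i$ either $v(G_i) < t$ or $e(G_i) \le 3v(G_i) - 6 - \tfrac{v(G_i)}{4(t-2)}$. The entire content of the proof is then the arithmetic you dismissed as routine: one checks the three cases (both pieces small; both pieces satisfy the bound; one small and one bounded), with a further split in the mixed case according to whether $V(G_1)\cap V(G_2)$ spans an edge of $G$. In each case the inequalities combine to give $e(G) \le 3v(G) - 6 - \tfrac{v(G)}{4(t-2)}$, a contradiction. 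So the ``hard'' step~(3) evaporates, and the work is entirely in the bookkeeping.
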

    \begin{proof}
        Suppose $G$ is not 3-connected.
        Then $G$ has edge-disjoint subgraphs $G_1$ and $G_2$ such that $G = G_1 \cup G_2$ and $|V(G_1) \cap V(G_2)| = j$ with $j \le 2$.
        By the minimality of $v(G)$, for each $i \in [2]$ either $v(G_i) < t$ or $e(G_i) \le 3v(G_i) - 6 - \frac{v(G_i)}{4(t-2)}$.
        We consider the three possibilities separately.
        
        First, assume $v(G_i) < t$ for $i\in [2]$. Then $v(G)<2t$. Therefore, since $G$ is not a triangulation,  $e(G) \le 3v(G) - 6 - \frac{v(G)}{4(t-2)}$, a contradiction.
        
        Now assume $e(G_i) \le 3v(G_i) - 6 - \frac{v(G_i)}{4(t-2)}$ for $i \in [2]$. Then $v(G_1) + v(G_2) \le v(G) + 2$, and so
        \begin{align*}
            e(G) &\le e(G_1) + e(G_2) \\
            & \le \Big( 3v(G_1) - 6 - \frac{v(G_1)}{4(t-2)} \Big)+ \Big( 3v(G_2) - 6 - \frac{v(G_2)}{4(t-2)}\Big) \\
            &\le 3v(G) - 6 - \frac{v(G)}{4(t-2)},
        \end{align*}
        a contradiction.
        
        So, up to relabeling, we may assume that $e(G_1) \le 3v(G_1) - 6 -\frac{v(G_1)}{4(t-2)}$ and $v(G_2) < t$.
        If $V(G_1) \cap V(G_2)$ is not an independent set in $G$, then $v(G_1) + v(G_2) = v(G) + 2$, and
        \begin{align*}
            e(G) &= e(G_1) + e(G_2) - 1 \\
            & \le \Big( 3v(G_1) - 6 - \frac{v(G_1)}{4(t-2)} \Big)+ \Big( 3v(G_2) - 6 \Big)- 1 \\
            &= 3v(G) - 7 - \frac{v(G_1)}{4(t-2)} \\
            &\le 3v(G) - 7 - \frac{v(G) - (t-2)}{4(t-2)} \\
            &= 3v(G) - 6 - \frac{v(G)}{4(t-2)} - \Big(1 - \frac{t-2}{4(t-2)}\Big),
        \end{align*}
        a contradiction.
        If $V(G_1) \cap V(G_2)$ is independent in $G$, then either $G_2$ is not a triangulation or $v(G_1) + v(G_2) \le v(G) + 1$.
        In either case a very similar calculation applies.
        So in all cases we arrive at a contradiction, and therefore $G$ is $3$-connected.
    \end{proof}

    Let $C$ be the outer cycle of $G$, and note that $(G, C)$ a circuit graph.
    Since $e(G) > 3v(G) - 6 - \frac{v(G)}{4(t-2)}$ we have $m(G) < \frac{v(G)}{4(t-2)} \le \frac{v(G) - (t-1)}{3t-7}$.
    By applying Theorem \ref{find a triangulation} to $(G, C)$ with $t = \lceil k^{\log_2 3}\rceil$, $G$ has a near triangulation subgraph $T$ with $v(T) \ge k^{\log_2 3}$.
    Since every near triangulation is a circuit graph, Theorem \ref{Chen-Yu} and Lemma \ref{lem: exact cycle} imply that $T$ contains $\theta_k$ as a subgraph.
\end{proof}

\section{Related problems} \label{sec: conclusion}

Theorem \ref{main} has interesting consequences related to circumference.
The \emph{circumference} of a graph (with a cycle) is the length of a longest cycle in the graph.
Erd\H os and Gallai proved that every $n$-vertex graph with circumference less than $k$ has at most $\frac{(n-1)(k-1)}{2}$ edges, and this is tight for infinitely many integers $n$ \cite{ErdosGallai}.
Theorem \ref{main} gives an analogue for planar graphs, because graphs with circumference less than $k$ are certainly $C_k$-free.

\begin{theorem}
    Let $n,k \ge 4$. If $G$ is an $n$-vertex planar graph with circumference less than $k$, then $e(G) \le 3n - 6 - \frac{n}{4k^{\log_2 3}}$.
\end{theorem}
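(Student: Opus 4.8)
The plan is to obtain this theorem as an immediate corollary of Theorem \ref{main}. The crucial observation, already flagged in the surrounding text, is that the circumference hypothesis is strictly stronger than the hypothesis of Theorem \ref{main}: if $G$ has circumference less than $k$, then $G$ contains no cycle of length at least $k$, and in particular no cycle of length exactly $k$. Thus $G$ has no $C_k$ subgraph, i.e.\ $G$ is $C_k$-free. This passage from the circumference condition to the subgraph-exclusion condition is purely definitional and is where essentially all of the ``work'' of the corollary lives.

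With that observation in hand, the argument is a single step. Since $G$ is an $n$-vertex planar graph with no $C_k$ subgraph, the definition of the planar Tur\'an number gives $e(G) \le \ex_{\cP}(n, C_k)$. Applying Theorem \ref{main} with the same $n$ and $k$ then yields $\ex_{\cP}(n, C_k) \le 3n - 6 - \frac{n}{4k^{\log_2 3}}$, and combining the two inequalities gives the claimed bound $e(G) \le 3n - 6 - \frac{n}{4k^{\log_2 3}}$.

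There is no genuine obstacle to overcome here, as Theorem \ref{main} does all of the heavy lifting. The only bookkeeping point worth verifying is that the parameter ranges align: Theorem \ref{main} is stated for all $k, n \ge 4$, which is precisely the range assumed in the present statement, so no boundary or small-case analysis is needed. I would therefore present the proof in two sentences, first establishing $C_k$-freeness from the circumference assumption and then invoking Theorem \ref{main}.
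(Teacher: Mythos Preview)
Your proposal is correct and matches the paper's own reasoning exactly: the paper presents this theorem as an immediate consequence of Theorem~\ref{main}, noting only that graphs with circumference less than $k$ are $C_k$-free. No separate proof is given in the paper, so your two-sentence argument is precisely what is intended.
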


Again, this is tight up to the constant $1/4$ because the construction of Gy{\H o}ri, Varga, and Zhu \cite{2-sum-counterexample} gives a lower bound of $3n - 6 - \frac{12n}{k^{\log_2 3}}$.

We believe that analogues of the Erd\H os-Gallai theorem are interesting for other classes of graphs (such as graphs with bounded genus, or without a fixed subgraph $H$), and also more generally for classes of matroids.
The \emph{circumference} of a matroid (with a circuit) is the number of elements in a largest circuit in the matroid.
We mention one question in this direction that is closely related to Theorem \ref{main}.
A matroid is \emph{cographic} if it is the dual of the cycle matroid of a graph.
It is well-known that a rank-$n$ cographic matroid has at most $3n - 3$ elements; this bound is tight for planar-graphic matroids.

\begin{problem}
    Show that there is a constant $D$ so that for all $n, k \ge 4$, if $M$ is a rank-$n$ cographic matroid with circumference less than $k$, then the number of elements of $M$ is at most $3n - 3 - \frac{Dn}{k^{\log_2 3}}$.
\end{problem}

This would follow from Theorem \ref{main} with $D = 1/4$ if every rank-$n$ cographic matroid with circumference less than $k$ and greater than $3n - 3 - \frac{Dn}{k^{\log_2 3}}$ elements is in fact planar-graphic.

Finally, we point out that Theorem \ref{find a triangulation} may be true for all $2$-connected plane graphs, not just circuit graphs.

\begin{problem}
    Does Theorem \ref{find a triangulation} hold for all $2$-connected plane graphs?
\end{problem}

The structure of 2-cuts in plane graphs that are not circuit graphs can be quite complicated, and it is unclear how to reduce from general 2-connected plane graphs to circuit graphs.
However, while we do make use of the connectivity properties of circuit graphs in Claim \ref{claim: face meets C}, there may be other arguments that hold for all $2$-connected plane graphs.

\bibliographystyle{plain}
\bibliography{references.bib}

\end{document}